\newtheorem{theorem}{Theorem}
\newtheorem{corollary}[theorem]{Corollary}
\theoremstyle{definition}
\newtheorem{remark}[theorem]{Remark}
\newcommand{\calA}{\mathcal{A}}
\begin{document}

\title[Generalizations of the Squircle-Lemniscate Relation]{Generalizations of the Squircle-Lemniscate Relation and Keplerian Dynamics}
\author{Zbigniew Fiedorowicz, Muthu Veerappan Ramalingam}
\address{\hbox{\parbox{150pt}{Department of Mathematics,\\
The Ohio State University\\
Columbus, Ohio 43235 -1174, USA}\hspace{80pt}
\parbox{\linewidth}{Aranthangi, Tamil Nadu, India}}
}

\begin{abstract}
This paper establishes a generalized relationship between the arc length of sinusoidal spirals $r^n = \cos(n\theta)$ and the area of Lam\'e curves defined by $x^{2n} + y^{2n} = 1$. Building on our previous work connecting the lemniscate to the squircle, we prove an integral identity relating these two curves for any positive integer $n$, which we further generalize to arbitrary positive real exponents and general superellipses. We further extend this correspondence to a geometric relationship between radial sectors of the Lam\'e curve and arc lengths of the spiral, providing a physical interpretation where Keplerian motion on the Lam\'e curve corresponds to uniform motion on the spiral. Additionally, we derive an explicit central force law for Keplerian motion along the Lam\'e curve. Finally, we introduce ``policles''—a new class of curves generalizing the squircle—and demonstrate a direct geometric mapping between their sectors and the arc lengths of sinusoidal spirals.
\end{abstract}

\subjclass{14H50, 26B15, 70F05}
\keywords{Lam\'e curves, sinusoidal spirals, central force law}
\maketitle

\section{Introduction}

In \cite{Fiedorowicz-Ramalingam} we showed there is a remarkable relation between the lemniscate
\[{(x^2+y^2)^2=x^2-y^2}\]
and the Lam\'e curve $x^4+y^4=1$, popularly known as the squircle, and we provided
an elementary proof of this relation. We also posed the question whether this result could be generalized
to an analogous relation between Lam\'e curves $x^{2n}+y^{2n}=1$ for $n>2$ and some hypothetical analogs
of the lemniscate. In this paper we answer this question and provide generalizations of our result in \cite{Fiedorowicz-Ramalingam}. 

The starting point in that paper was the following integral equality due to Levin \cite{alevin}
\begin{equation}
\int_0^1\frac{\,dr}{\sqrt{1-r^4}}=\sqrt{2}\int_0^1\sqrt[4]{1-x^4}\,dx,\label{alevin_eqn}
\end{equation}
which implies that the perimeter of the lemniscate is $\sqrt{2}$ times the area of the squircle. We then generalized
this to a relation between areas of radial sectors of the squircle and lengths of geometrically related arcs of the
lemniscate.

We also offered a physical interpretation of this result: Keplerian motion of a particle around the squircle
is geometrically related to uniform motion of a particle around the lemniscate. Here Keplerian motion means the
particle sweeps out radial sectorial area at a constant rate, as in Kepler's second law of planetary motion.

In this paper we begin by generalizing equation \eqref{alevin_eqn} to the following integral equality
\begin{equation}
\int_0^1\frac{\,dr}{\sqrt{1-r^{2n}}}=2^{\frac{1}{n}}\int_0^1\sqrt[2n]{1-x^{2n}}\,dx.\label{eqn2}
\end{equation}
This equality implies that the perimeter of the sinusoidal spiral \cite{Wikipedia:sinusoidal_spiral}, given by the polar equation $r^n=\cos(n\theta)$
is $2^{\frac{1}{n}-1}n$ times the area of the Lam\'e curve\footnote{G. Lam\'e initiated the study of these curves in 1818  \cite[p. 105]{lame}, hence
the nomenclature.} ${x^{2n}+y^{2n}=1}$, \cite{Wikipedia:superellipse}.
We then generalize the main result of \cite{Fiedorowicz-Ramalingam} relating radial sector areas of the Lam\'e curves to lengths of geometrically
related arcs of sinusoidal spirals. We then offer a physical interpretation of this result as relating Keplerian motion
along Lam\'e curves to uniform motion along sinusoidal spirals, which we illustrate graphically in the case $n=3$. We
also provide a central force interpretation of Keplerian motion on Lam\'e curves. 
Finally we discuss an alternative
generalization, which relates sinusoidal spirals to different generalizations of the squircle, policles whose polar equations
have the form
\begin{equation}
r^4=\frac{n\sin^2(n\theta)}{1-\cos^{2n}(n\theta)}
\end{equation}

Note that many integral equations discussed in this paper are valid for positive real values of $n$, and have geometric
interpretations provided the corresponding curves are restricted to portions contained within the first quadrant. However
in order to provide nicer global geometric interpretations, we will find it convenient to assume in what follows that $n$ is a
positive integer.

We would like to take this opportunity to acknowledge the valuable assistance of Google Gemini AI \cite{gemini}
which provided some crucial insights in the development of this paper. We also thank David Duncan for his encouragement to pursue this line of research.

\bigskip

\section{Sinusoidal Spirals}

A \textit{sinusoidal spiral} is a curve given by the polar equation $r^n=\cos(n\theta)$, c.f. \cite{Wikipedia:sinusoidal_spiral}, \cite{mathcurve.com}, \cite{lawrence}, \cite{maclaurin}.
While sinusoidal spirals have been studied for all rational values of $n$, both positive and
negative, as noted above we are only interested in the case when $n$ is a positive integer.  In this case the graph resembles an $n$-leaf clover and is sometimes referred to by this name e.g. \cite{thyde}, \cite{cox}. Figure \ref{fig:sinusoidal_spiral-n=5} below illustrates the case for $n=5$.
\begin{figure}[H]
\centering
\includegraphics[width=150pt]{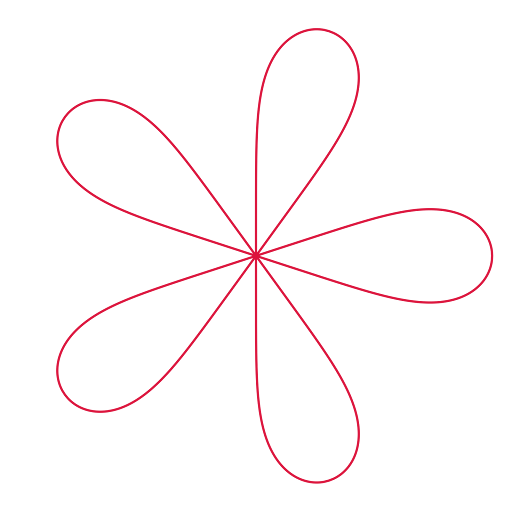}
\caption{The Sinusoidal Spiral $r^5=\cos(5\theta)$}
\label{fig:sinusoidal_spiral-n=5}
\end{figure}
When $n=1$ the sinusoidal spiral is just the circle with cartesian equation $\left(x-\frac{1}{2}\right)^2+y^2=\frac{1}{4}$
and when $n=2$, the sinusoidal spiral is just the lemniscate with cartesian equation $(x^2+y^2)^2=x^2-y^2$. The rotation
$\theta\mapsto\theta-\frac{\pi}{2n}$ converts the polar equation of the sinusoidal spiral to the alternative form $r^n=\sin(n\theta)$.

The sinusoidal spiral $r^n=\cos(n\theta)$, when $n$ is a positive integer, can be identified with the set of complex
numbers $z=re^{i\theta}$ satisfying the equation $\left|z^n-\frac{1}{2}\right|=\frac{1}{2}$. Thus it is a special case
of a general family of planar curves called \textit{polynomial lemniscates} \cite{Wikipedia:polynomial_lemniscate},
which take the form
\[\left\{z\in\mathbf{C}\ |\ |p(z)|=c\right\},\]
where $p(z)$ is a complex polynomial and $c$ is a positive real number.

In the context of this paper, sinusoidal spirals are of particular interest because their arc length formulas are given by
integrals which are simple generalizations of the elliptic integral $\int\frac{\,dr}{\sqrt{1-r^4}}$ giving the arc length of
the lemniscate. Recall that the differential arc length formula for a polar curve is
\begin{equation}\label{infinitesimal_arc_length}
ds=\sqrt{r^2d\theta^2 +dr^2}.
\end{equation}
Usually the polar equation is given in the form $r=f(\theta)$, in which case the arc length integral takes the form
\begin{equation}
L=\int_{\Theta_1}^{\Theta_2}ds =\int_{\Theta_1}^{\Theta_2}\sqrt{r^2+\left(\frac{dr}{d\theta}\right)^2}\,d\theta.
\end{equation}
However if the polar equation can be written in the form $\theta=g(r)$, then the arc length formula takes the form
\begin{equation}\label{arc_length_in_r}
L=\int_{R_1}^{R_2}ds =\int_{R_1}^{R_2}\sqrt{1+r^2\left(\frac{d\theta}{dr}\right)^2}\,dr.
\end{equation}
In the case of the sinusoidal spiral restricted to the interval $0\le\theta\le\frac{\pi}{2n}$, the polar equation 
can be rewritten as $\theta=\frac{1}{n}\arccos(r^n)$, and using equation \eqref{arc_length_in_r} we obtain
\begin{align}
L &\quad=\quad \int_{R_1}^{R_2}\sqrt{1+r^2\left(-\frac{r^{n-1}}{\sqrt{1-r^{2n}}}\right)^2}\,dr\nonumber\\
&\quad=\quad \int_{R_1}^{R_2}\sqrt{1+\frac{r^{2n}}{1-r^{2n}}}\,dr\nonumber\\
L&\quad=\quad \int_{R_1}^{R_2}\frac{\,dr}{\sqrt{1-r^{2n}}},\label{sinusoidal_arc_length}
\end{align}
where $0\le R_1\le R_2\le 1$.

For reference we also recall the formula for the area of radial sectors of polar curves:
\begin{equation}\label{polar_area_formula}
A=\int_{\Theta_1}^{\Theta_2}\frac{1}{2}r^2\,d\theta.
\end{equation}

\bigskip

\section{The Lam\'e-Sinusoidal Spiral Relation}

We first establish the fundamental integral identity linking the geometry of the Lam\'e curve to that of the sinusoidal spiral.

\begin{theorem}\label{fundamental_integral_theorem}
For any positive integer $n$, the following equality holds:
\begin{equation}\label{fundamental_identity}
    \int_0^1 \frac{\,dr}{\sqrt{1-r^{2n}}} = 2^{1/n} \int_0^1 \sqrt[2n]{1-x^{2n}} \, dx
\end{equation}
\end{theorem}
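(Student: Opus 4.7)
The plan is to express both integrals in terms of the classical Beta function and then verify the stated identity via Legendre's duplication formula for $\Gamma$.

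First I would attack the left-hand side with the substitution $u = r^{2n}$, so that $r = u^{1/(2n)}$ and $dr = \frac{1}{2n}u^{1/(2n)-1}\,du$. This turns the integral into
\begin{equation*}
\int_0^1\frac{dr}{\sqrt{1-r^{2n}}} = \frac{1}{2n}\int_0^1 u^{\frac{1}{2n}-1}(1-u)^{-\frac{1}{2}}\,du = \frac{1}{2n}\,B\!\left(\tfrac{1}{2n},\tfrac{1}{2}\right).
\end{equation*}
Then I would treat the right-hand side the same way, with $v = x^{2n}$, obtaining
\begin{equation*}
\int_0^1 (1-x^{2n})^{\frac{1}{2n}}\,dx = \frac{1}{2n}\int_0^1 v^{\frac{1}{2n}-1}(1-v)^{\frac{1}{2n}}\,dv = \frac{1}{2n}\,B\!\left(\tfrac{1}{2n},\,1+\tfrac{1}{2n}\right).
\end{equation*}

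Next I would convert each Beta function into Gamma functions via $B(a,b)=\Gamma(a)\Gamma(b)/\Gamma(a+b)$, and exploit the recursions $\Gamma(1+\tfrac{1}{2n})=\tfrac{1}{2n}\Gamma(\tfrac{1}{2n})$ and $\Gamma(1+\tfrac{1}{n})=\tfrac{1}{n}\Gamma(\tfrac{1}{n})$ to simplify the right-hand Beta function. After cancellation the desired identity \eqref{fundamental_identity} reduces to the single equality
\begin{equation*}
\frac{\sqrt{\pi}}{\Gamma\!\left(\tfrac{1}{2n}+\tfrac{1}{2}\right)} \;=\; \frac{2^{1/n}}{2}\cdot\frac{\Gamma(\tfrac{1}{2n})}{\Gamma(\tfrac{1}{n})}.
\end{equation*}

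Finally I would recognize the last equation as Legendre's duplication formula $\Gamma(z)\Gamma(z+\tfrac{1}{2}) = 2^{1-2z}\sqrt{\pi}\,\Gamma(2z)$ specialized to $z=\tfrac{1}{2n}$: cross-multiplying yields $\Gamma(\tfrac{1}{2n})\Gamma(\tfrac{1}{2n}+\tfrac{1}{2}) = 2^{1-1/n}\sqrt{\pi}\,\Gamma(\tfrac{1}{n})$, which is exactly what is needed. The main obstacle here is not conceptual but purely bookkeeping: the $2^{1/n}$ factor on the right of \eqref{fundamental_identity} must emerge correctly from the exponent $1-2z = 1-\tfrac{1}{n}$ in the duplication formula, so I would double-check the powers of $2$ at every step. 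An alternative elementary route — avoiding the Gamma function entirely — would be to transform the LHS via $x=(1-r^{2n})^{1/(2n)}$ and then combine with a second Beta-type manipulation, but the Gamma-function route is the cleanest and generalizes transparently to the non-integer case flagged in the introduction.
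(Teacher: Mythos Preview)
Your proof is correct. The substitutions, the Beta-function identifications, and the reduction to Legendre's duplication formula all check out exactly as you describe.

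However, this is \emph{not} the route the paper takes for its main proof. The paper's proof is deliberately elementary: it parametrizes the first-quadrant portion of the Lam\'e curve by $x=\cos^{1/n}(nt)$, $y=\sin^{1/n}(nt)$, computes the area $\calA=\int_0^1(1-x^{2n})^{1/(2n)}\,dx$ via Green's theorem $\calA=\tfrac{1}{2}\int(x\,dy-y\,dx)$, and then reduces the resulting trigonometric integral step-by-step (double-angle identity, symmetry about $\pi/(2n)$, and finally the substitution $r=\sin^{1/n}(nu)$) directly to $2^{-1/n}\int_0^1\frac{dr}{\sqrt{1-r^{2n}}}$. No Gamma or Beta functions appear. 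What this buys the authors is a self-contained argument that makes the geometric link between the two curves visible through the shared parametrization --- the same $(\cos^{1/n}(nt),\sin^{1/n}(nt))$ expression is simultaneously the polar equation of the spiral and a parametrization of the Lam\'e curve.

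Your Beta/Gamma approach is in fact exactly what the paper presents in the \emph{Remark} immediately following the proof, where it is described as the ``more concise proof'' in the style of Levin and of \textit{Squigonometry}. So you have independently reproduced the alternate proof the authors acknowledge; it is shorter and, as you note, generalizes transparently to non-integer $n$, but it trades the explicit geometry for reliance on the duplication formula.
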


\begin{proof}
The integral $\calA=\int_0^1 \sqrt[2n]{1-x^{2n}} \, dx$ represents the area of the portion of
the Lam\'e curve $x^{2n}+y^{2n}=1$ contained within the first quadrant. An alternative way of
computing this area is to use the following parametrization of the Lam\'e curve:
\[x=\cos^{\frac{1}{n}}(nt),\ y=\sin^{\frac{1}{n}}(nt),\quad 0\le t\le\frac{\pi}{2n}.\]
(Note that these are the same as the polar equations of the sinusoidal spiral restricted to the interval $\left[0,\frac{\pi}{2n}\right]$.)
Thus by Green's theorem, we have
\begin{align*}
\calA &=\frac{1}{2}\int_{\partial\{(x,y)\,|\,x^{2n}+y^{2n}\le1,\,x\ge0,\,y\ge0\}}(x\,dy-y\,dx)\\
&=0+0+\frac{1}{2}\int_0^{\frac{\pi}{2n}}\left(x\frac{dy}{dt}-y\frac{dx}{dt}\right)\,dt\\
&=\frac{1}{2}\int_0^{\frac{\pi}{2n}}\left(\frac{\cos^{\frac{1}{n}}(nt)\sin^{\frac{1}{n}}(nt)\cos(nt)}{\sin(nt)}+
\frac{\sin^{\frac{1}{n}}(nt)\cos^{\frac{1}{n}}(nt)\sin(nt)}{\cos(nt)}\right)\,dt\\
&=\frac{1}{2}\int_0^{\frac{\pi}{2n}}\left(\frac{\cos^{\frac{1}{n}}(nt)\sin^{\frac{1}{n}}(nt)\left(\cos^2(nt)+\sin^2(nt)\right)}{\cos(nt)\sin(nt)}\right)\,dt\\
&=\frac{1}{2^{\frac{1}{n}}}\int_0^{\frac{\pi}{2n}}\left(\frac{2^{\frac{1}{n}}\cos^{\frac{1}{n}}(nt)\sin^{\frac{1}{n}}(nt)}{2\cos(nt)\sin(nt)}\right)\,dt\\
&=\frac{1}{2^{\frac{1}{n}}}\int_0^{\frac{\pi}{2n}}\frac{\sin^{\frac{1}{n}}(2nt)}{\sin(2nt)}\,dt
\end{align*}
Substituting $u=2t$ into the last integral, we obtain
\begin{align}
\calA&=\frac{1}{2^{1+\frac{1}{n}}}\int_0^{\frac{\pi}{n}}\frac{\sin^{\frac{1}{n}}(nu)}{\sin(nu)}\,du\nonumber\\
&=
\frac{1}{2^{1+\frac{1}{n}}}\left[\int_0^{\frac{\pi}{2n}}\frac{\sin^{\frac{1}{n}}(nu)}{\sin(nu)}\,du+\int_{\frac{\pi}{2n}}^{\frac{\pi}{n}}\frac{\sin^{\frac{1}{n}}(nu)}{\sin(nu)}\,du\right]\label{area_eqn2}
\end{align}
Using the substitution $v=\frac{\pi}{n}-u$, we see that
\begin{align*}
\int_{\frac{\pi}{2n}}^{\frac{\pi}{n}}\frac{\sin^{\frac{1}{n}}(nu)}{\sin(nu)}\,du &=-\int_{\frac{\pi}{2n}}^0\frac{\sin^{\frac{1}{n}}(\pi-nv)}{\sin(\pi-nv)}\,dv\\
&=\int_0^{\frac{\pi}{2n}}\frac{\sin^{\frac{1}{n}}(nv)}{\sin(nv)}\,dv\\
&=\int_0^{\frac{\pi}{2n}}\frac{\sin^{\frac{1}{n}}(nu)}{\sin(nu)}\,du
\end{align*}
Thus \eqref{area_eqn2} simplifies to
\begin{equation}
\calA=\frac{1}{2^{1+\frac{1}{n}}}\left[2\int_0^{\frac{\pi}{2n}}\frac{\sin^{\frac{1}{n}}(nu)}{\sin(nu)}\,du\right]=
\frac{1}{2^{\frac{1}{n}}}\int_0^{\frac{\pi}{2n}}\frac{\sin^{\frac{1}{n}}(nu)}{\sin(nu)}\,du.
\end{equation}
Making the substitution $r=\sin^{\frac{1}{n}}(nu)$, we obtain
\[\frac{dr}{du}=\frac{\sin^{\frac{1}{n}}(nu)\cos(nu)}{\sin(nu)}=\frac{\sin^{\frac{1}{n}}(nu)\sqrt{1-r^{2n}}}{\sin(nu)},\]
which implies that
\begin{equation}
\calA=\frac{1}{2^{\frac{1}{n}}}\int_0^1\frac{\,dr}{\sqrt{1-r^{2n}}}
\end{equation}
Hence
\[\int_0^1\frac{\,dr}{\sqrt{1-r^{2n}}}=2^{\frac{1}{n}}\calA=2^{\frac{1}{n}}\int_0^1 \sqrt[2n]{1-x^{2n}} \, dx.\]
This completes the proof.
\end{proof}

\bigskip

\begin{remark}
A more concise proof of Theorem \ref{fundamental_integral_theorem} can be carried out using the same approach
used in \cite{alevin} and \cite[p. 173]{squigonometry_book} to prove \eqref{alevin_eqn}. Namely evaluating the two
integrals on both sides of \eqref{fundamental_identity} in terms of the beta ${B(x,y)=\int_0^1 t^{x-1}(1-t)^{y-1}\,dt}$
and gamma ${\Gamma(x)=\int_0^\infty t^{x-1}e^{-t}\,dt}$ functions and simplifying (\cite{eartin}, \cite{whittaker-watson},
\cite{Wikipedia:beta_function}, \cite{Wikipedia:gamma_function}) yields
\begin{align*}
\int_0^1 \sqrt[2n]{1-x^{2n}} \, dx &= \frac{1}{2n}B\left(\frac{1}{2n},1+\frac{1}{2n}\right)
=\frac{\Gamma\left(\frac{1}{2n}\right)^2}{4n\Gamma\left(\frac{1}{n}\right)}\\
\int_0^1\frac{\,dr}{\sqrt{1-r^{2n}}} &=\frac{1}{2n}B\left(\frac{1}{2n},\frac{1}{2}\right)
=\frac{\sqrt{\pi}\Gamma\left(\frac{1}{2n}\right)}{2n\Gamma\left(\frac{1}{2n}+\frac{1}{2}\right)}
=\frac{\Gamma\left(\frac{1}{2n}\right)^2}{2^{2-\frac{1}{n}}n\Gamma\left(\frac{1}{n}\right)}
\end{align*}
Comparison of the two equalities verifies equation \eqref{fundamental_identity}.
\end{remark}

\bigskip

\begin{remark}
In view of \eqref{sinusoidal_arc_length}, the left hand side of equation \eqref{fundamental_identity} is half the
arc length of a leaf of the sinusoidal spiral $r^n=\cos(n\theta)$. If we adopt the notation of \cite{thyde}, \cite{cox}, then
\begin{equation}
\varpi_{2n}=2\int_0^1\frac{\,dr}{\sqrt{1-r^{2n}}}
\end{equation}
is the arc length of a full leaf, and thus the perimeter of the sinusoidal spiral $r^n=\cos(n\theta)$ is $n\varpi_{2n}$.

As noted in the proof, the integral on the right hand side of equation \eqref{fundamental_identity} is  the area of the portion of
the Lam\'e curve $x^{2n}+y^{2n}=1$ contained within the first quadrant. Thus the total area enclosed by the
Lam\'e curve is
\begin{align}
\calA\left(x^{2n}+y^{2n}\le1\right) &\quad=\quad 4\int_0^1\sqrt[2n]{1-x^{2n}}\,dx\nonumber\\
&\quad=\quad 2^{1-\frac{1}{n}}\varpi_{2n}
\end{align}
We can easily extend this to compute the area of the \textit{superellipse} \cite{Wikipedia:superellipse}
${\left(\frac{x}{a}\right)^{2n}+\left(\frac{y}{b}\right)^{2n}=1}$ as follows:
\begin{align}
\calA\left(\left(\frac{x}{a}\right)^{2n}+\left(\frac{y}{b}\right)^{2n}\le1\right)
&\quad=\quad 4\int_0^a b\sqrt[2n]{1-\left(\frac{x}{a}\right)^{2n}}\,dx\nonumber\\
&\quad=\quad 4b\int_0^1 \sqrt[2n]{1-u^{2n}}\,a\,du\quad\left(u=\frac{x}{a}\right)\nonumber\\
&\quad=\quad 4ab\int_0^1 \sqrt[2n]{1-u^{2n}}\,du\quad\nonumber\\
&\quad=\quad 2^{1-\frac{1}{n}}\varpi_{2n}ab
\end{align}
Moreover it is clear that the proof of Theorem \ref{fundamental_integral_theorem} and the above calculations go
forward when the exponent $2n$ is replaced by an arbitrary positive real number $\alpha$. Thus we obtain the
following corollary.
\end{remark}

\bigskip

\begin{corollary}
Let $\alpha>0$. Then the area enclosed by the superellipse 
${\left|\frac{x}{a}\right|^{\alpha}+\left|\frac{y}{b}\right|^{\alpha}=1}$ is
\begin{equation}
\calA=2^{1-\frac{2}{\alpha}}\varpi_{\alpha}ab,
\end{equation}
where ${\varpi_{\alpha}=2\int_0^1\frac{\,dr}{\sqrt{1-r^{\alpha}}}}$ represents the arc length of the principal leaf of the sinusoidal
spiral ${r^{\frac{\alpha}{2}}=\cos\left(\frac{\alpha}{2}\,\theta\right)}$ within the polar sector 
${-\frac{\pi}{\alpha}\le\theta\le\frac{\pi}{\alpha}}$.
\end{corollary}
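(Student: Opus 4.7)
The plan is to reduce the area to a single one-dimensional integral and then invoke the $2n\mapsto\alpha$ extension of Theorem \ref{fundamental_integral_theorem} asserted in the preceding remark. Using the fourfold symmetry of the superellipse and the substitution $u=x/a$, one immediately gets
\begin{equation*}
\calA=4\int_0^a b\left(1-\left(\tfrac{x}{a}\right)^{\alpha}\right)^{1/\alpha}\,dx=4ab\int_0^1(1-u^{\alpha})^{1/\alpha}\,du,
\end{equation*}
mirroring the calculation carried out for the case $\alpha=2n$ at the end of the remark preceding the corollary.

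Next I would apply Theorem \ref{fundamental_integral_theorem} in the form
\begin{equation*}
\int_0^1\frac{\,dr}{\sqrt{1-r^{\alpha}}}=2^{2/\alpha}\int_0^1(1-u^{\alpha})^{1/\alpha}\,du,
\end{equation*}
obtained by replacing $2n$ with $\alpha$, and combine it with the definition $\varpi_{\alpha}=2\int_0^1 dr/\sqrt{1-r^{\alpha}}$ to get $\int_0^1(1-u^{\alpha})^{1/\alpha}\,du=2^{-1-2/\alpha}\varpi_{\alpha}$. Substituting this back into the area expression yields $\calA=4ab\cdot 2^{-1-2/\alpha}\varpi_{\alpha}=2^{1-2/\alpha}\varpi_{\alpha}ab$, which is exactly the claim.

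The only nontrivial step, and hence the main obstacle, is justifying that Theorem \ref{fundamental_integral_theorem} really does generalize from integer $n$ to arbitrary positive real $\alpha/2$. I would rerun the proof of Theorem \ref{fundamental_integral_theorem} using the parametrization $x=\cos^{2/\alpha}(\alpha t/2)$, $y=\sin^{2/\alpha}(\alpha t/2)$ for $t\in[0,\pi/\alpha]$. Since $\alpha t/2\in[0,\pi/2]$ throughout this interval, both $\cos(\alpha t/2)$ and $\sin(\alpha t/2)$ are nonnegative, so the fractional powers are well-defined and smooth on the interior and extend continuously to the endpoints. With this in hand, the Green's theorem computation, the substitution $u=2t$, the reflection $v=\pi/\alpha-u$, and the final substitution $r=\sin^{2/\alpha}(\alpha u/2)$ all rely solely on positivity of the relevant trigonometric quantities and on the chain rule, none of which requires $\alpha/2$ to be an integer. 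Consequently the identity, and with it the corollary, holds for every $\alpha>0$.
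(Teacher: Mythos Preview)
Your proposal is correct and follows essentially the same route as the paper: the remark immediately preceding the corollary computes $\calA=4ab\int_0^1(1-u^{2n})^{1/(2n)}\,du=2^{1-1/n}\varpi_{2n}ab$ via the substitution $u=x/a$ and Theorem~\ref{fundamental_integral_theorem}, and then simply asserts that the proof of Theorem~\ref{fundamental_integral_theorem} and this calculation go through verbatim when $2n$ is replaced by an arbitrary real $\alpha>0$. Your only addition is to spell out more explicitly why the parametrization and the substitutions in that proof remain valid for non-integer exponents, which is a welcome clarification but not a different argument.
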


\bigskip


\section{Generalized Geometric Relations}

In our previous work, we utilized a result of Siegel to establish a geometric correspondence between the lemniscate and the squircle,
\cite[Theorem 10]{Fiedorowicz-Ramalingam}. We now provide a generalized version of this theorem which relates the sinusoidal spirals to the generalized Lam\'e curves.

\begin{theorem}\label{siegel_relation}
Let $n$ be a positive real number, $0\le T\le1$, and $R^n=\frac{2T^n}{1+T^{2n}}$. Then
\begin{equation}\label{siegel_eqn}
\int_0^R\frac{\,dr}{\sqrt{1-r^{2n}}}=2^{\frac{1}{n}}\int_0^T\frac{\,dv}{\sqrt[n]{1+v^{2n}}}.
\end{equation}
\end{theorem}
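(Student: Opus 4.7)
The proof should work by a direct substitution on the left-hand side, mirroring the strategy Siegel used (and the one I expect was used in \cite{Fiedorowicz-Ramalingam} for the case $n=2$). The key move is to promote the boundary relation $R^n = \frac{2T^n}{1+T^{2n}}$ to a change of variable defined implicitly by
\[
r^n \;=\; \frac{2v^n}{1+v^{2n}},
\]
which sends $v=0$ to $r=0$ and $v=T$ to $r=R$. I would verify monotonicity on $[0,T]$ (using $T\le 1$, so that the numerator grows faster than the denominator over this range) to ensure this really is a valid substitution.

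The heart of the computation is then the following two algebraic identities, which together mimic the Weierstrass-type half-angle collapse. First, implicit differentiation yields
\[
n\,r^{n-1}\,dr \;=\; \frac{2n\,v^{n-1}\,(1-v^{2n})}{(1+v^{2n})^2}\,dv.
\]
Second, and more importantly,
\[
1 - r^{2n} \;=\; 1 - \frac{4v^{2n}}{(1+v^{2n})^2}
\;=\; \frac{(1-v^{2n})^2}{(1+v^{2n})^2},
\]
so $\sqrt{1-r^{2n}} = \frac{1-v^{2n}}{1+v^{2n}}$ on $[0,T]$. This perfect-square identity is the crux of the argument; it is exactly what makes the integrand on the left collapse cleanly into the integrand on the right.

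Dividing the two displays (and using $r^{n-1} = 2^{(n-1)/n} v^{n-1} (1+v^{2n})^{-(n-1)/n}$ to eliminate the $r^{n-1}$ factor in favor of $v$) should produce the clean expression
\[
\frac{dr}{\sqrt{1-r^{2n}}} \;=\; \frac{2^{1/n}}{\sqrt[n]{1+v^{2n}}}\,dv,
\]
after the powers of $2$ and of $(1+v^{2n})$ combine as $1-\frac{n-1}{n}=\frac{1}{n}$. Integrating from $v=0$ to $v=T$ and using the endpoint correspondence then gives \eqref{siegel_eqn}.

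I expect no real obstacle beyond guessing the substitution. Once the formula $r^n = 2v^n/(1+v^{2n})$ is written down, everything else is a guided bookkeeping exercise: the differentiation, the perfect-square simplification of $1-r^{2n}$, and the power-of-two arithmetic. The only subtlety worth flagging is that the substitution is only bijective for $T\le 1$ (which is exactly the hypothesis), since $r^n$ as a function of $v$ peaks at $v=1$; this is why the statement restricts to $0\le T\le 1$.
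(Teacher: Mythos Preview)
Your proposal is correct and follows essentially the same route as the paper: the substitution $r^n=\dfrac{2v^n}{1+v^{2n}}$, the perfect-square identity $1-r^{2n}=\dfrac{(1-v^{2n})^2}{(1+v^{2n})^2}$, and the elimination of $r^{n-1}$ via $r^{n-1}=2^{(n-1)/n}v^{n-1}(1+v^{2n})^{-(n-1)/n}$ are exactly the steps the paper carries out. Your remark on monotonicity (that $v\mapsto r$ is increasing on $[0,1]$, with the map peaking at $v=1$) is a nice addition that the paper leaves implicit.
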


\begin{proof}
We make the substitution $r^n=\frac{2v^n}{1+v^{2n}}$ in the integral on the left hand side. Then we have
\[\left(\frac{r}{v}\right)^n = \frac{2}{1+v^{2n}}\]
Raising both sides of this equation to the $-\frac{n-1}{n}$ power, we obtain
\begin{equation}
\left(\frac{v}{r}\right)^{n-1}=\frac{(1+v^{2n})^{1-\frac{1}{n}}}{2^{1-\frac{1}{n}}}
\end{equation}
We also have
\begin{align*}
1-r^{2n} &\quad=\quad 1-\left(\frac{2v^n}{1+v^{2n}}\right)^2\\
&\quad=\quad\frac{(1+v^{2n})^2-4v^{2n}}{(1+v^{2n})^2}\\
&\quad=\quad\frac{1+2v^{2n}+v^{4n}-4v^{2n}}{(1+v^{2n})^2}\\
&\quad=\quad\frac{1-2v^{2n}+v^{4n}}{(1+v^{2n})^2}\\
&\quad=\quad\frac{(1-v^{2n})^2}{(1+v^{2n})^2}
\end{align*}
Hence we have
\begin{equation}
\frac{1}{\sqrt{1-r^{2n}}}=\frac{1+v^{2n}}{1-v^{2n}}
\end{equation}
Differentiating the substitution we obtain
\begin{align*}
nr^{n-1}\,\frac{dr}{dv} &\quad=\quad\frac{2nv^{n-1}(1+v^{2n})-2v^n\cdot2nv^{2n-1}}{(1+v^{2n})^2}\\
&\quad=\quad\frac{2nv^{n-1}+2nv^{3n-1}-4nvv^{3n-1}}{(1+v^{2n})^2}\\
&\quad=\quad\frac{2nv^{n-1}-2nv^{3n-1}}{(1+v^{2n})^2}\\
&\quad=\quad 2nv^{n-1}\frac{1-v^{2n}}{(1+v^{2n})^2}
\end{align*}
Hence we have
\begin{equation}
dr= 2\left(\frac{v}{r}\right)^{n-1}\frac{1-v^{2n}}{(1+v^{2n})^2}\,dv
\end{equation}
Substituting (11), (12) and (13) into the integral on the left hand side of (10), we obtain
\begin{align*}
\int_0^R\frac{\,dr}{\sqrt{1-r^{2n}}} &\quad=\quad\int_0^T\left(\frac{1+v^{2n}}{1-v^{2n}}\right)\frac{1}{2}\left(\frac{v}{r}\right)^{n-1}\frac{1-v^{2n}}{(1+v^{2n})^2}\,dv\\
&\quad=\quad\frac{1}{2}\int_0^T\left(\frac{1}{1+v^{2n}}\right)\left(\frac{(1+v^{2n})^{1-\frac{1}{n}}}{2^{1-\frac{1}{n}}}\right)\,dv\\
&\quad=\quad 2^{\frac{1}{n}}\int_0^T\frac{\,dv}{\sqrt[n]{1+v^{2n}}}.
\end{align*}
\end{proof}
\bigskip

Theorem \ref{siegel_relation} has the following geometric interpretation as illustrated by the following figure in the case $n=3$.

\begin{figure}[H]
\centering
\includegraphics[width=300pt]{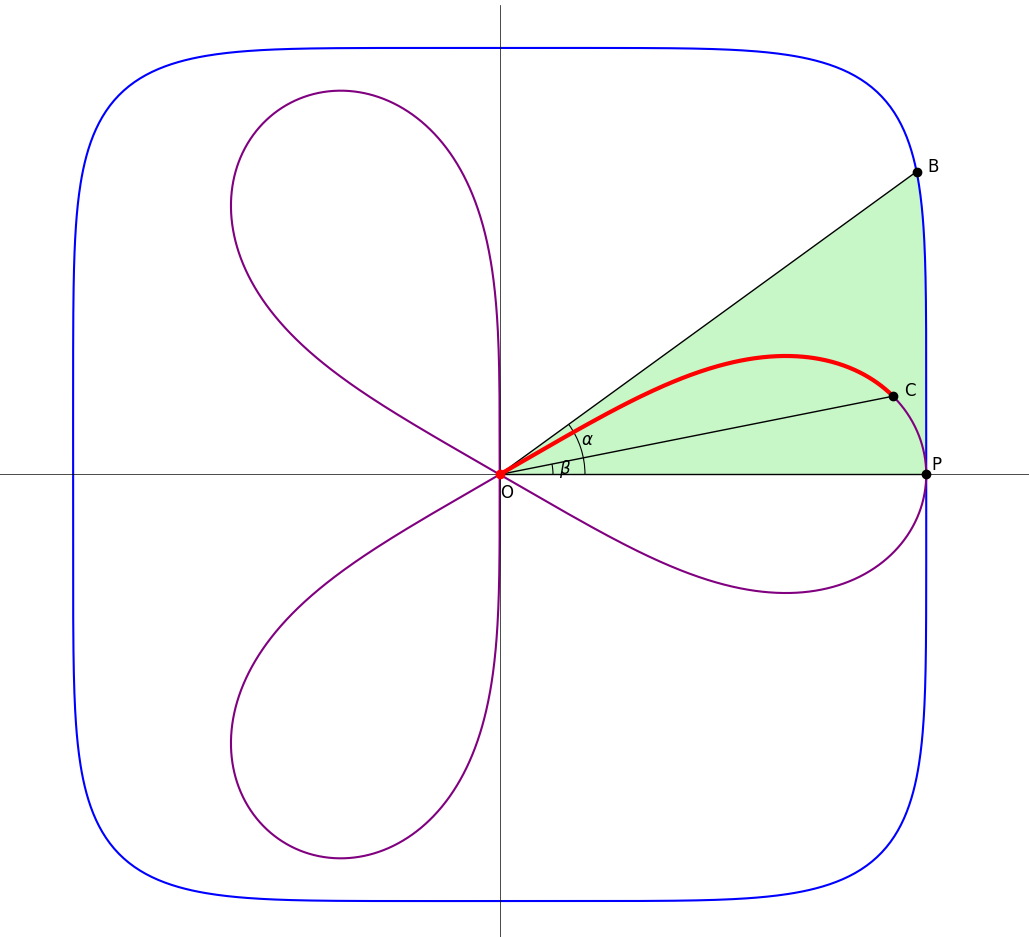}
\caption{Relation Between $x^6+y^6=1$  and $r^3=\cos(3\theta)$}
\label{fig:siegel_lame_spiral}
\end{figure}

\bigskip

\begin{corollary}\label{siegel_corollary}
Let $0\le\alpha\le\frac{\pi}{4}$, $T=\tan(\alpha)$, and $\beta=\frac{1}{n}\arccos\left(\frac{2T^n}{1+T^{2n}}\right)$.
Then
\begin{equation}
l=2^{1+\frac{1}{n}}a,
\end{equation}
where $l$ is the length of the arc of the sinusoidal spiral $r^n=\cos(n\theta)$ within the polar sector $\beta\le\theta\le\frac{\pi}{2n}$ and $a$ is the area of the radial sector of the Lam\'e curve $x^{2n}+y^{2n}=1$ within
$0\le\theta\le\alpha$.
\end{corollary}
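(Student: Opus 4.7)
The plan is to identify each side of the desired equality as one of the two integrals appearing in Theorem \ref{siegel_relation}, after which the corollary follows immediately. The key point is that the choice $\beta = \frac{1}{n}\arccos\!\left(\frac{2T^n}{1+T^{2n}}\right)$ is engineered exactly so that the upper limit $R$ of the left-hand integral in \eqref{siegel_eqn} corresponds to the radial value of the sinusoidal spiral at $\theta=\beta$.

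First, I would handle the arc length. On $0\le\theta\le\frac{\pi}{2n}$ the spiral $r^n=\cos(n\theta)$ is monotone in $r$, going from $r=1$ at $\theta=0$ to $r=0$ at $\theta=\frac{\pi}{2n}$. At $\theta=\beta$ we have $r^n=\cos(n\beta)=\frac{2T^n}{1+T^{2n}}=R^n$, so restricting to the polar sector $\beta\le\theta\le\frac{\pi}{2n}$ corresponds to $0\le r\le R$. Invoking \eqref{sinusoidal_arc_length} with $R_1=0$, $R_2=R$, we get
\[
l=\int_0^R\frac{dr}{\sqrt{1-r^{2n}}},
\]
which is precisely the left-hand side of \eqref{siegel_eqn}.

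Next, I would compute the sectorial area $a$. Writing the Lam\'e curve in polar form, $r^{2n}(\cos^{2n}\theta+\sin^{2n}\theta)=1$, and applying the polar area formula \eqref{polar_area_formula},
\[
a=\frac{1}{2}\int_0^\alpha\bigl(\cos^{2n}\theta+\sin^{2n}\theta\bigr)^{-1/n}\,d\theta.
\]
The substitution $v=\tan\theta$ (so $d\theta=dv/(1+v^2)$ and $\cos^{2n}\theta+\sin^{2n}\theta=(1+v^{2n})/(1+v^2)^n$) converts this to
\[
a=\frac{1}{2}\int_0^T\frac{dv}{\sqrt[n]{1+v^{2n}}},
\]
which is, up to the factor $\tfrac{1}{2}$, the right-hand side of \eqref{siegel_eqn}.

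Finally, substituting both expressions into Theorem \ref{siegel_relation} gives $l=2^{1/n}\cdot(2a)=2^{1+\frac{1}{n}}a$, as claimed. I do not anticipate a serious obstacle; the only subtlety is verifying that the change of variables $v=\tan\theta$ cleanly reduces the polar area integral to the Siegel integrand, which is a short but essential calculation that ties the two sides together geometrically.
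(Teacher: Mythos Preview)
Your proof is correct and follows essentially the same route as the paper: both compute $l$ via \eqref{sinusoidal_arc_length}, rewrite the Lam\'e sector area in polar form and reduce it by the substitution $v=\tan\theta$ to $\frac{1}{2}\int_0^T(1+v^{2n})^{-1/n}\,dv$, and then invoke Theorem~\ref{siegel_relation}. The only cosmetic difference is that the paper writes the polar form as $r^2=\sec^2\theta/\sqrt[n]{1+\tan^{2n}\theta}$ before substituting, whereas you substitute directly into $(\cos^{2n}\theta+\sin^{2n}\theta)^{-1/n}$.
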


\begin{proof}
By equation \eqref{sinusoidal_arc_length}
\begin{equation}\label{length_eqn}
l=\int_0^R\frac{\,dr}{\sqrt{1-r^{2n}}},
\end{equation}
where
\[R^n=\cos(n\beta)=\frac{2T^n}{1+T^{2n}}.\]

On the other hand the cartesian equation of the Lam\'e curve $x^{2n}+y^{2n}=1$ converts to polar form as follows:
\begin{align*}
[r \cos(\theta)]^{2n} + [r \sin(\theta)]^{2n} &\quad=\quad 1\\
r^{2n} (\cos^{2n}(\theta) + \sin^{2n}(\theta) &\quad=\quad 1\\
r^{2n}\cos^{2n}(\theta)\left[1+\tan^{2n}(\theta)\right]&\quad=\quad 1,
\end{align*}
which yields
\begin{align*}
r^{2n}&\quad=\quad\frac{\sec^{2n}(\theta)}{1+\tan^{2n}(\theta)}\\
r^2&\quad=\quad\frac{\sec^{2}(\theta)}{\sqrt[n]{1+\tan^{2n}(\theta)}}.
\end{align*}
Then by \eqref{polar_area_formula}
\begin{align}
a = \int_0^\alpha\frac{1}{2}r^2\,d\theta &=
 \frac{1}{2}\int_0^\alpha \frac{\sec^{2}(\theta)}{\sqrt[n]{1+\tan^{2n}(\theta)}}\nonumber\\
&= \frac{1}{2}\int_0^T \frac{\,dv}{\sqrt[n]{1+v^{2n}}}\quad(v=\tan(\theta))\label{area_eqn}
\end{align}
Comparison of equations \eqref{siegel_eqn}, \eqref{length_eqn} and \eqref{area_eqn} establishes the relation
$l=2^{1+\frac{1}{n}}a$.
\end{proof}

\bigskip

\begin{remark}
In the case $n=2$, there is a simple relation between the radial polar coordinates of the point $B$ on the squircle
$x^4+y^4=1$ and the point $C$ on the lemniscate $r^2=\cos(2\theta)$ whose angular polar coordinates are
$\theta=\alpha$ and $\theta=\beta$ respectively. Namely $R=\overline{OC}$ and $S=\overline{OB}$ are related by.
\begin{align*}
R^2 &=\cos(2\beta)=\frac{2T^2}{1+T^4}\\
S^2 &=\frac{\sec^2(\alpha)}{\sqrt{1+\tan^4(\alpha)}}=\frac{1+T^2}{\sqrt{1+T^4}}
\end{align*}
Hence
\[S^4=\frac{1+2T^2+T^4}{1+T^4}=1+\frac{2T^2}{1+T^4}=1+R^2.\]
\end{remark}

\vspace{30pt}

\begin{remark}
Theorem \ref{fundamental_integral_theorem}  is a consequence of Corollary \ref{siegel_corollary}. For if we take $\alpha=\frac{\pi}{4}$, then $T=1$,
$R=1$ and $\beta=0$. Thus $l=\int_0^1\frac{\,dr}{\sqrt{1-r^{2n}}}$ and $a=\frac{1}{2}\int_0^1\frac{\,dv}{\sqrt[n]{1+v^{2n}}}$ is $\frac{1}{8}$ of the area
enclosed by the Lam\'e curve $x^{2n}+y^{2n}=1$. Since $\int_0^1\sqrt[2n]{1-x^{2n}}\,dx$ is $\frac{1}{4}$ of this area, we have
\begin{equation}
\int_0^1\frac{\,dr}{\sqrt{1-r^{2n}}}=l=2^{1+\frac{1}{n}}a=2^{\frac{1}{n}}(2a)=2^{\frac{1}{n}}\int_0^1\sqrt[2n]{1-x^{2n}}\,dx
\end{equation}
\end{remark}

\bigskip

\section{Physical Interpretation: Keplerian vs Uniform Motion}\label{physics_interpretation}

The relation described in Corollary \ref{siegel_corollary}  implies that Keplerian motion along the Lam\'e curve $x^{2n}+y^{2n}=1$ within the polar sector  $0\le\theta\le\frac{\pi}{4}$ corresponds geometrically to uniform motion along the sinusoidal spiral $r^n=\cos(n\theta)$  within the polar sector $0\le\theta\le\frac{\pi}{2n}$.  By symmetry this correspondence extends to such a correspondence between the entire Lam\'e curve and the entire sinusoidal spiral, answering a question posed in \cite[Remark 6]{Fiedorowicz-Ramalingam}.

We illustrate this relation graphically for the case $n=3$ in Figure \ref{fig:Keplerian}. The area sectors on the Lam\'e curve (swept at a constant areal rate) correspond precisely to the arc lengths on the sinusoidal spiral (traversed at a constant speed).

\begin{figure}[H]
\centering
\includegraphics[width=\textwidth]{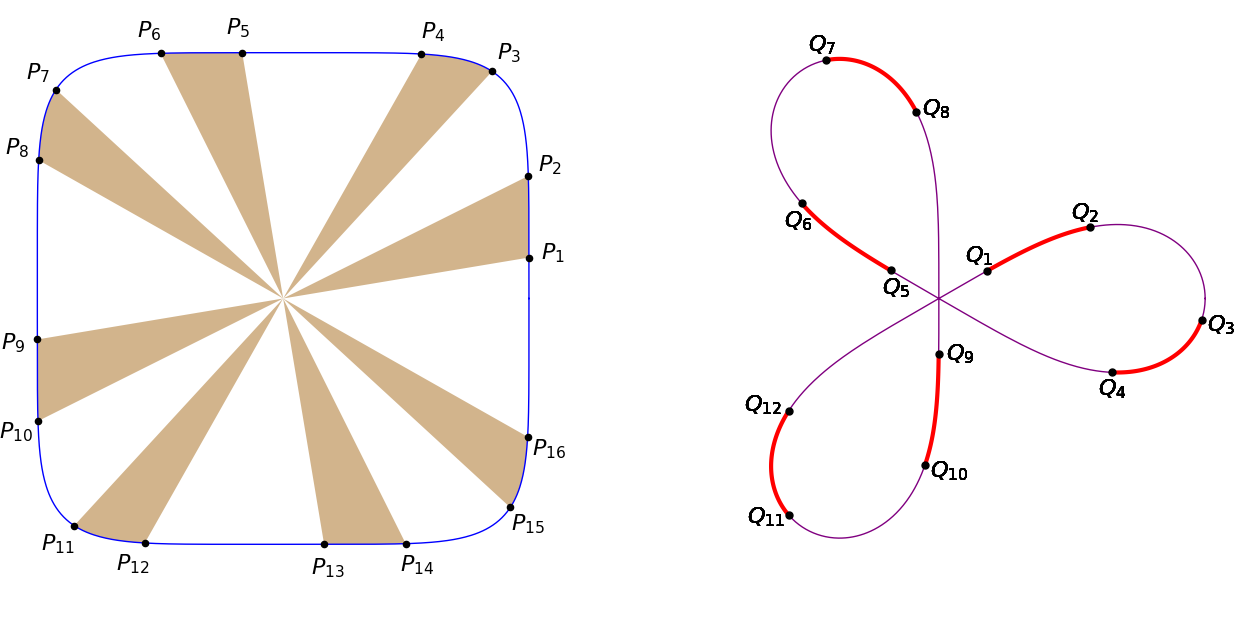}
\caption{Comparison of Keplerian motion on the Lam\'e curve $x^6+y^6=1$ and uniform motion on the sinusoidal spiral $r^3=\cos(3\theta)$.}
\label{fig:Keplerian}
\end{figure}
A full cycle of paired motions on the Lam\'e curve and the sinusoidal spiral is described by the following pattern:
\begin{align*}
\to(P_1,Q_1)&\to(P_2,Q_2)\to(P_3,Q_3)\to(P_4,Q_4)\to(P_5,Q_5)\to(P_6,Q_6)\\
&\to(P_7,Q_1)\to(P_8,Q_2)\to(P_1,Q_3)\to(P_2,Q_4)\to(P_3,Q_5)\to(P_4,Q_6)\\
&\to(P_5,Q_1)\to(P_6,Q_2)\to(P_7,Q_3)\to(P_8,Q_4)\to(P_1,Q_5)\to(P_2,Q_6)\\
&\to(P_3,Q_1)\to(P_4,Q_2)\to(P_5,Q_3)\to(P_6,Q_4)\to(P_7,Q_5)\to(P_8,Q_6)\to
\end{align*}

\section{General Force Law for Lam\'e Curves}

It is well known that a particle moving in Keplerian motion around the origin is subject to a central force directed
towards the origin. In this section we derive an explicit formula for the central force which compels a particle
to move in Keplerian motion around the Lam\'e curve $x^{2n}+y^{2n}=1$.

\begin{theorem}
For $n>1$ the central force law for a particle in Keplerian motion around the Lam\'e curve $x^{2n}+y^{2n}=1$ has the form
\begin{equation}\label{central_force_formula}
F(r)=-Cr^{4n-3}w^{2n-2},
\end{equation}
where $w = \sin\theta\cos\theta = \frac{1}{2}\sin(2\theta)$ and $C$ is a constant depending on the physical properties
of the moving particle.
\end{theorem}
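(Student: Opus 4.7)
The plan is to apply Binet's equation for central-force orbits. Since keplerian motion preserves the specific angular momentum $L = r^2 \dot\theta$ (Kepler's second law), the radial force on a particle of mass $m$, expressed in terms of $u = 1/r$ with primes denoting $d/d\theta$, takes the form
\[F(r) = -mL^2 u^2 (u'' + u).\]
Converting the Lam\'e equation $x^{2n}+y^{2n}=1$ to polar form yields the clean expression $u^{2n} = \cos^{2n}\theta + \sin^{2n}\theta$, so the task reduces to computing $u'' + u$ and multiplying by $u^2$.

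First I would introduce the shorthand $c = \cos\theta$, $s = \sin\theta$, $w = sc$, and $v = c^{2n} + s^{2n} = u^{2n}$. Implicit differentiation of the polar equation immediately gives
\[u^{2n-1} u' = w(s^{2n-2} - c^{2n-2}),\]
so $u' = g\, u^{1-2n}$ where $g(\theta) := w(s^{2n-2} - c^{2n-2})$. Differentiating once more and clearing powers of $u$ produces the master identity
\[u^{4n-1}(u''+u) = g'\, v + (1-2n)\, g^2 + v^2,\]
reducing the problem to three explicit polynomial expressions in $c$ and $s$.

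The main obstacle is the algebraic collapse of this expression to a function of $w$ alone. Routine differentiation, together with the identity $c^2 s^{2n-2} + s^2 c^{2n-2} = w^2(s^{2n-4}+c^{2n-4})$, gives $g' = (2n-1)\, w^2(s^{2n-4}+c^{2n-4}) - v$; and direct expansion yields $g^2 = w^2(s^{4n-4}+c^{4n-4}) - 2w^{2n}$ and $v^2 = c^{4n}+s^{4n}+2w^{2n}$. The bare $v^2$ will cancel against the $-v^2$ piece generated by $g'v$, and the $w^{2n}$ terms will recombine. The key nontrivial step I expect to have to identify is the telescoping relation
\[v(s^{2n-4}+c^{2n-4}) - (s^{4n-4}+c^{4n-4}) = w^{2n-4}(c^4+s^4),\]
obtained by expanding the product and recognising that $c^{2n}s^{2n-4}+s^{2n}c^{2n-4} = w^{2n-4}(c^4+s^4)$. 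A final application of $(c^2+s^2)^2 = 1$ then gives
\[u^{4n-1}(u''+u) = (2n-1)\, w^{2n-2}(c^4+s^4+2c^2s^2) = (2n-1)\, w^{2n-2},\]
and Binet's formula delivers $F(r) = -(2n-1)mL^2\, r^{4n-3}\, w^{2n-2}$, confirming the stated central force law with $C = (2n-1)mL^2$.
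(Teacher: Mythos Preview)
Your proof is correct and follows essentially the same approach as the paper: both apply Binet's equation to $u^{2n}=\cos^{2n}\theta+\sin^{2n}\theta$, compute $u^{4n-1}(u+u'')$ by two implicit differentiations, and reduce the resulting trigonometric expression to $(2n-1)w^{2n-2}$ via the same key collapse (the paper phrases it as $AC-B^2=w^{2n-4}$ with $A=v$, $B=s^{2n-2}-c^{2n-2}$, $C=s^{2n-4}+c^{2n-4}$, which is equivalent to your telescoping identity). The only difference is packaging: you bundle $wB$ into a single function $g$ and arrive at the compact master identity $u^{4n-1}(u''+u)=g'v+(1-2n)g^2+v^2$, whereas the paper keeps $w$ and $B$ separate throughout.
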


\begin{proof}
The differential equation of an orbit under a central force $F(r)$ directed towards the origin is given by Binet's equation \cite{Wikipedia:binet_equation}, \cite[p. 87 (3.34)]{goldstein}, \cite{cline}, \cite{cooper}:
\begin{equation} \label{binet}
    F(r) = -m h^2 u^2 \left( u + \frac{d^2u}{d\theta^2} \right)
\end{equation}
where:
\begin{itemize}
    \item $u=\frac{1}{r}$,
    \item $h$ is the specific angular momentum (constant),
    \item $m$ is the mass of the orbiting body.
\end{itemize}

We begin by rewriting the cartesian equation for the Lam\'e curve in polar coordinates:
 $x = r \cos(\theta)$ and $y = r \sin(\theta)$:
\begin{align*}
x^{2n}+y^{2n} &\quad=\quad 1\\
[r \cos(\theta)]^{2n} + [r \sin(\theta)]^{2n} &\quad=\quad 1,
\end{align*}
obtaining
\begin{align*}
r^{2n} (\cos^{2n}(\theta) + \sin^{2n}(\theta) &\quad=\quad 1\\
r^{2n} &\quad=\quad \frac{1}{\cos^{2n}(\theta) + \sin^{2n}(\theta)}\\
u^{2n} &\quad=\quad \cos^{2n} (\theta) + \sin^{2n} (\theta)
\end{align*}

To avoid notational clutter we abbreviate $c=\cos(\theta)$ and $s=\sin(\theta)$. Thus
\begin{equation}
u^n=c^{2n}+s^{2n}:=A\label{A_defn}
\end{equation}
Differentiating with respect to $\theta$:
\begin{align*}
2n u^{2n-1} u' &= 2n s^{2n-1} c - 2n c^{2n-1} s \\
u^{2n-1} u' &= sc(s^{2n-2} - c^{2n-2})
\end{align*}
Let $w = sc$ and $B = s^{2n-2} - c^{2n-2}$.
We have:
\begin{equation}
u^{2n-1} u' = w B \label{eq:first_deriv}
\end{equation}

Differentiating (\ref{eq:first_deriv}) with respect to $\theta$ we obtain:
\begin{equation}
(2n-1)u^{2n-2}(u')^2 + u^{2n-1}u'' = (wB)' = w'B + wB'
\end{equation}
We compute the derivatives of the components:
\begin{itemize}
    \item $w' = (sc)' = c^2 - s^2$
    \item $B' = (s^{2n-2} - c^{2n-2})' = (2n-2)s^{2n-3}c - (2n-2)c^{2n-3}(-s)$
    \item $B' = (2n-2)sc(s^{2n-4} + c^{2n-4}) = (2n-2)w C$, where $C = s^{2n-4} + c^{2n-4}$.
\end{itemize}
Substituting these back we get:
\begin{equation}
(2n-1)u^{2n-2}(u')^2 + u^{2n-1}u'' = (c^2-s^2)B + (2n-2)w^2 C \label{eq:second_deriv}
\end{equation}

It will be convenient in what follows to take 
\[X = u^{4n-1}(u+u'') = u^{2n}(u^{2n-1}u'') + u^{4n}=A(u^{2n-1}u'').\]
From (\ref{eq:second_deriv}), we isolate $u^{2n-1}u''$:
$$ u^{2n-1}u'' = (c^2-s^2)B + (2n-2)w^2 C - (2n-1)u^{2n-2}(u')^2 $$
From (\ref{eq:first_deriv}), we have $(u')^2 = \frac{w^2 B^2}{u^{4n-2}}$.
Thus:
$$ (2n-1)u^{2n-2}(u')^2 = \frac{(2n-1)w^2 B^2}{u^{2n}} $$
Substituting this into the expression for $X$:
\begin{align*}
X &= A \left[ (c^2-s^2)B + (2n-2)w^2 C - \frac{(2n-1)w^2 B^2}{A} \right] + A^2 \\
X &= A^2+A(c^2-s^2)B + (2n-2)w^2 AC - (2n-1)w^2 B^2
\end{align*}
We simplify the first two terms $A^2+A(c^2-s^2)B$:
\begin{align*}
(c^2-s^2)B &= (c^2-s^2)(s^{2n-2}-c^{2n-2}) \\
&= s^{2n-2}c^2 - c^{2n} - s^{2n} + s^2c^{2n-2} \\
&= s^2c^2(s^{2n-4} + c^{2n-4}) - (s^{2n} + c^{2n}) \\
&= w^2 C - A
\end{align*}
Therefore:
\[A(c^2-s^2)B + A^2 = A(w^2 C - A) + A^2 = A w^2 C - A^2 + A^2 = A w^2 C.\]
Substituting this simplified term back into $X$:
\begin{align*}
X &= (A w^2 C) + (2n-2)A w^2 C - (2n-1)w^2 B^2 \\
X &= (2n-1) A w^2 C - (2n-1) w^2 B^2 \\
X &= (2n-1) w^2 [ AC - B^2 ]
\end{align*}

Finally we evaluate $AC - B^2$:
\begin{align*}
AC &= (s^{2n}+c^{2n})(s^{2n-4}+c^{2n-4}) = s^{4n-4} + c^{4n-4} + s^{2n}c^{2n-4} + c^{2n}s^{2n-4} \\
B^2 &= (s^{2n-2}-c^{2n-2})^2 = s^{4n-4} + c^{4n-4} - 2s^{2n-2}c^{2n-2}
\end{align*}
Subtracting:
\begin{align*}
AC - B^2 &= s^{2n}c^{2n-4} + c^{2n}s^{2n-4} + 2s^{2n-2}c^{2n-2} \\
&= s^{2n-4}c^{2n-4} ( s^4 + c^4 + 2s^2c^2 ) \\
&= w^{2n-4} (s^2+c^2)^2 \\
&= w^{2n-4}(1)^2=w^{2n-4}
\end{align*}
Substituting back into $X$ we get
$$ X = (2n-1) w^2 [ w^{2n-4} ] = (2n-1) w^{2n-2} $$
Since $X = u^{4n-1}(u+u'')$, we obtain the following:
\begin{equation}
u + u'' = \frac{(2n-1)w^{2n-2}}{u^{4n-1}}\label{u+ddu}
\end{equation}

Substituting (\ref{u+ddu}) into equation \eqref{binet} we obtain
\begin{align*}
F(r) &\quad=\quad -m h^2 u^2 \left( \frac{(2n-1)w^{2n-2}}{u^{4n-1}}\right)\\
&\quad=\quad -C\left( \frac{(2n-1)w^{2n-2}}{u^{4n-3}}\right)\\
&\quad=\quad -Cr^{4n-3}w^{2n-2},
\end{align*}
where $C=(2n-1)mh^2$.
\end{proof}

\bigskip

\begin{remark}
The case $n=1$ is degenerate. A circular orbit is compatible with any attractive central force.
\end{remark}

\bigskip

\begin{remark}
It might appear that equation \eqref{central_force_formula} is a defective central force formula, since it involves both
polar coordinates $r$ and $\theta$ instead of $r$ alone. However one should keep in mind that the polar coordinates
$r$ and $\theta$ are not independent, since they are related to each other by the equation of the Lam\'e curve. In
fact by applying the half-angle formulas we can rewrite
\begin{equation}\label{half-angle-simplification}
u^{2n}=\cos^{2n}(\theta)+\sin^{2n}(\theta)=\left[\frac{1+\cos(2\theta)}{2}\right]^n+\left[\frac{1-\cos(2\theta)}{2}\right]^n
\end{equation}
Moreover in equation \eqref{half-angle-simplification}, the odd powers of $\cos(2\theta)$ cancel out and by another
application of the half-angle formula $\cos^2(2\theta)=\frac{1+\cos(4\theta)}{2}$, $u^{2n}$ can be expressed as a
polynomial in $\cos(4\theta)$. Further using the trigonometric identity 
\[\cos(4\theta)=1-2\sin^2(2\theta)=1-8w^2,\] 
we can rewrite $u^{2n}$ as a polynomial  in $w^2$. For $n=2,3$ the relation between $u^{2n}$ and $w^2$ is linear and for
$n=4,5$ the relation is quadratic. Hence in those cases we can explicitly solve for $w^2$ in terms of $u=\frac{1}{r}$, and
thus obtain central force formulas explicitly in terms of $r$ alone. One obtains the following formulas:
\begin{align*}
n=2 &\quad:\quad F(r)=Cr(1-r^4)\\
n=3 &\quad:\quad F(r)=-C\frac{(1-r^6)^2}{r^3}\\
n=4 &\quad:\quad F(r)=Cr\left(\sqrt{2r^8+2}-2r^4\right)^3\\
n=5 &\quad:\quad F(r)=Cr^2\left(\sqrt{5r^{10}+2}-5r^5\right)^3
\end{align*}
\end{remark}

\bigskip

\section{Sinusoidal Spirals and Policles}

The reader might observe that Theorem 2 generalizes \cite[Theorem 10]{Fiedorowicz-Ramalingam}, which is a
reformulation of the main result \cite[Theorem 1]{Fiedorowicz-Ramalingam}, based on a calculation of Siegel \cite{siegel}. 
The reader might inquire whether there is a simple direct generalization of that main result relating
the squircle and lemniscate to one between Lam\'e curves and sinusoidal spirals. Regrettably, this is
not the case. However there is such a simple direct generalization if we replace the Lam\'e curves $x^{2n}+y^{2n}=1$
by a different generalization of the squircle: curves given by polar equations
\begin{equation}
r^4=\frac{n\sin^2(n\theta)}{1-\cos^{2n}(n\theta)}.
\end{equation}
When $n=2$ this curve is a squircle, a rounded square.
For general $n$ these curves look like rounded regular $2n$-gons.
We call these curves \textit{policles} (``polygon'' $+$ ``circle'') by analogy with squircle (``square'' $+$ ``circle'').
With this modification, we have the following simple direct generalization of \cite[Theorem 1]{Fiedorowicz-Ramalingam},
illustrated by the following figure for the case $n=3$.

\begin{figure}[H]
\centering
\includegraphics[width=250pt]{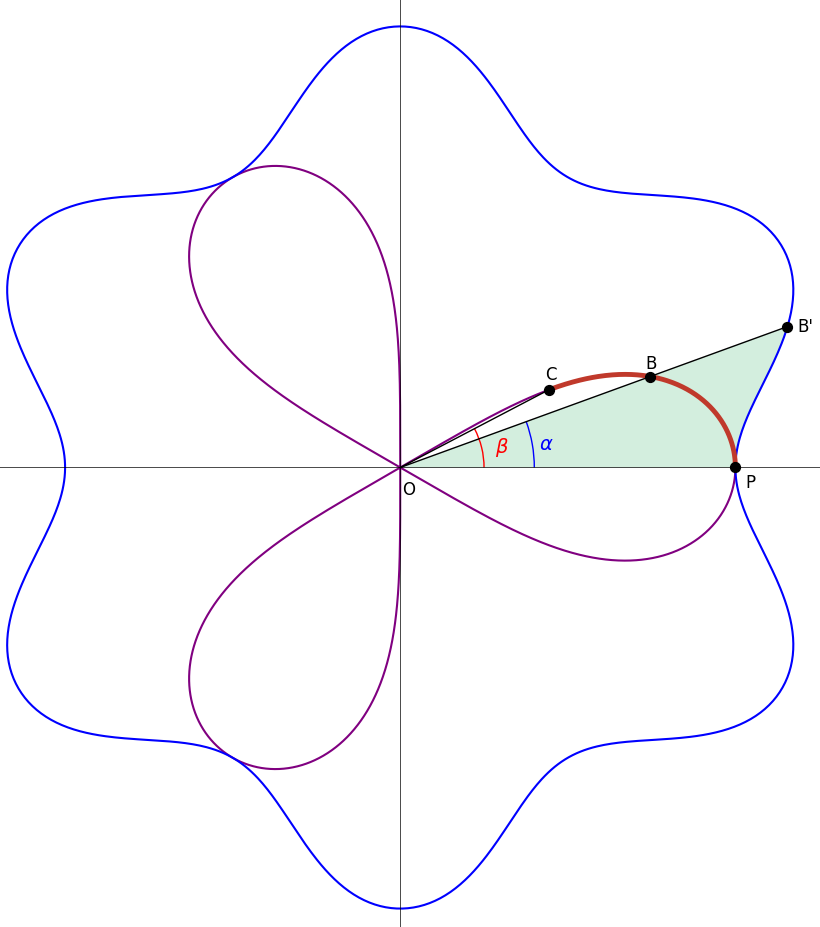}
\caption{Policle and Sinusoidal Spiral.}
\label{fig:policle_spiral}
\end{figure}

\begin{theorem}Let $B$ be a point in the polar sector $0\le\theta\le\frac{\pi}{2n}$ of the sinusoidal spiral
 $r^n=\cos(n\theta)$ and let $B'$ 
be its radial projection onto the policle $r^4=\frac{n\sin^2(n\theta)}{1-\cos^{2n}(n\theta)}$. Let $C$ be the point on this polar sector of the sinusoidal spiral such that 
$\overline{OC}=\overline{OB}^n$. Then
\begin{equation}
l=2a\sqrt{n},
\end{equation}
where $l$ is the arc length of the sinusoidal spiral from $C$ to $P=(1,0)$ and $a$ denotes the area of the policular sector $OPB'$.
\end{theorem}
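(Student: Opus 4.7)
The plan is to convert both sides of the identity $l = 2a\sqrt{n}$ into integrals in the common variable $\theta$ and observe they agree.

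First, I would parametrize $B$ by its polar angle $\theta_B \in [0, \pi/(2n)]$, so that $r_B^n = \cos(n\theta_B)$ and consequently $\overline{OC} = r_B^n = \cos(n\theta_B)$; call this value $r_C$. Since $P = (1,0)$ corresponds to $\theta = 0$ on the spiral (where $r = 1$) and $r$ decreases monotonically as $\theta$ increases on $[0, \pi/(2n)]$, the arc from $C$ to $P$ is exactly the piece with $r \in [r_C, 1]$. Applying the arc-length formula \eqref{sinusoidal_arc_length} gives $l = \int_{r_C}^1 dr/\sqrt{1 - r^{2n}}$. I would then rewrite this integral in $\theta$ via the substitution $v = \cos(n\theta)$, with $dv = -n\sin(n\theta)\,d\theta$, carrying $v \in [r_C, 1]$ onto $\theta \in [\theta_B, 0]$ and yielding
\[
l = \int_0^{\theta_B} \frac{n\sin(n\theta)}{\sqrt{1-\cos^{2n}(n\theta)}}\,d\theta.
\]

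For the other side, I would apply the polar area formula \eqref{polar_area_formula} to the sector $OPB'$. Since $B'$ shares the polar angle of $B$, and taking the positive square root of the policle equation yields $r^2 = \sqrt{n}\,\sin(n\theta)/\sqrt{1 - \cos^{2n}(n\theta)}$ on $[0, \pi/(2n)]$ (where $\sin(n\theta) \ge 0$), I obtain
\[
a = \frac{\sqrt{n}}{2}\int_0^{\theta_B}\frac{\sin(n\theta)}{\sqrt{1-\cos^{2n}(n\theta)}}\,d\theta,
\]
so that $2a\sqrt{n}$ reproduces precisely the integral expression obtained for $l$.

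The main subtleties are geometric bookkeeping rather than analysis. One should verify that $P = (1,0)$ genuinely lies on the policle as a limit (the defining equation is $0/0$ at $\theta = 0$, but a Taylor expansion shows $r^4 \to n \cdot n^2\theta^2 / (n^3\theta^2) = 1$), that the direction of the arc from $C$ to $P$ matches the orientation of the arc-length integral, and that the hypothesis $\overline{OC} = \overline{OB}^n$ is precisely what makes the lower limit $r_C$ coincide with the value $\cos(n\theta_B)$ of $v$ at $\theta = \theta_B$. In essence, the proof is immediate once set up because the policle's polar equation was engineered so that its $r^2$ is exactly $\sqrt{n}$ times the integrand of the spiral's arc length rewritten in $\theta$.
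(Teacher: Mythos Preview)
Your proposal is correct and is essentially the paper's own argument: both compute $l$ from the arc-length formula \eqref{sinusoidal_arc_length} and $a$ from the polar area formula \eqref{polar_area_formula}, then identify them via the single substitution $r=\cos(n\theta)$. The only cosmetic difference is the direction of that substitution---the paper pushes the area integral from $\theta$ to $r$, whereas you pull the arc-length integral from $r$ to $\theta$---and your added remarks on the $\theta\to 0$ limit of the policle and the role of the hypothesis $\overline{OC}=\overline{OB}^n$ are correct bookkeeping.
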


\begin{proof}
Let the polar coordinates of $B$ be $(r,\theta)=(R_1,\alpha)$ and the polar coordinates of $C$ be 
$(r,\theta)=(R_2,\beta)$. Then by definition we have
\begin{equation}
R_2=\overline{OC}=\overline{OB}^n=R_1^n=\cos(n\alpha)
\end{equation}

Calculating arc lengths and radial sector areas in polar coordinates, we obtain
\begin{equation}
l =\int_{R_2}^1\frac{\,dr}{\sqrt{1-r^{2n}}}=\int_{\cos(n\alpha)}^1\frac{\,dr}{\sqrt{1-r^{2n}}}\label{l_eqn}
\end{equation}
and
\begin{equation}
a=\int_0^{\alpha}\frac{1}{2}r^2\,d\theta=\frac{1}{2}\int_0^{\alpha}\frac{\sqrt{n}\sin(n\theta)d\theta}{\sqrt{1-\cos^{2n}(n\theta)}}\label{a_eqn}
\end{equation}
Making the substitution $r=\cos(n\theta)$ in equation \eqref{a_eqn}, we have
\[dr =-n\sin(n\theta)\,d\theta\quad\Longrightarrow\quad \sin(n\theta)\,d\theta=\frac{1}{n}dr\]
and we obtain
\[a= \frac{1}{2}\int_{\cos(n\alpha)}^1\frac{\sqrt{n}\frac{1}{n}dr}{\sqrt{1-r^{2n}}}=\frac{1}{2\sqrt{n}}\int_{\cos(n\alpha)}^1\frac{\,dr}{\sqrt{1-r^{2n}}}=\frac{1}{2\sqrt{n}}l.\]
Thus $l=2a\sqrt{n}$.
\end{proof}

\bigskip

As in Section \ref{physics_interpretation}, we can interpret this correspondence as relating Keplerian motion around the policle to uniform motion
around the sinusoidal spiral. We leave the details as an exercise for the reader.

\vspace{35pt}

\end{document}